\DeclareFontFamily{U}{tipa}{}
\DeclareFontShape{U}{tipa}{m}{n}{<->tipa10}{}
\newcommand{\arc@char}{{\usefont{U}{tipa}{m}{n}\symbol{62}}}%
\newcommand{\arc}[1]{\mathpalette\arc@arc{#1}}
\newcommand{\arc@arc}[2]{%
  \sbox0{$\m@th#1#2$}%
  \vbox{
    \hbox{\resizebox{\wd0}{\height}{\arc@char}}
    \nointerlineskip
    \box0
  }%
}
\newcommand{\bba}{\mathbf{a}}
\newcommand{\bbp}{\mathbf{p}}
\newtheorem{theorem}{Theorem}[section]
\newtheorem{lemma}[theorem]{Lemma}
\newtheorem{proposition}[theorem]{Proposition}
\theoremstyle{definition}
\theoremstyle{definition}
\theoremstyle{definition}
\newlength{\mywidth}
\begin{document}
\settowidth{\mywidth}{PQRSPQ}
\settowidth{\mywidth}{PQR}
 
\title[Crux's crux's crux]{Crux's crux's crux}

\subjclass{Primary 97Gxx; Secondary 00-01, 51-01, 97-01, 26B12}

\author[Amol Sasane]{Amol Sasane}
\address{Department of Mathematics \\London School of Economics\\
    Houghton Street\\ London WC2A 2AE\\ United Kingdom}
\email{A.J.Sasane@lse.ac.uk}

\begin{abstract} 
Problem 1325 from {\em Crux Mathematicorum} is revisited, and a new solution is presented. 
\end{abstract}

\keywords{Vector calculus, Euclidean geometry}

\maketitle 

\section{Introduction}

\noindent The following problem (proposed by Stanley Rabinowitz),
appeared as problem 1325 in {\em Crux\footnote{First `Crux' in the
    title.} Mathematicorum}.  We call this the {\em
  Crux\footnote{Second `crux' in the title.} Problem}, since the
accompanying diagram contains a shaded `cross'=crux.

\medskip 

 \noindent {\bf Crux Problem.} 
 Let $P$ be any point inside a unit circle with center
 $C$. Perpendicular chords are drawn through $P$.  Rotation of these
 chords counterclockwise about $P$ through an angle $\theta$ sweep
 out the shaded area shown in the picture below. Show that this shaded
 area only depends on $\theta$, but not on $P$ (and hence is easily
 seen to be $2\theta$ by taking $P=C$).
 \begin{figure}[H]
      \center
      \psfrag{O}[c][c]{$C$}
      \psfrag{P}[c][c]{$P$}
      \psfrag{a}[c][c]{$\bba$}
      \psfrag{Q}[c][c]{$ \widetilde{\bbp}_1(t)$}
      \includegraphics[width=4.8cm]{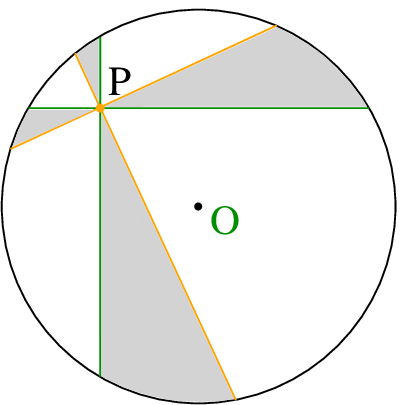}
\end{figure}

\noindent There were two solutions that appeared \cite[pp. 120-122]{C}:
\begin{itemize}
 \item[(I)] (J\"org H\"aterich) a solution using calculus and Archimedes' theorem, 
 \item[(II)] (Shiko Iwata) a non-calculus solution based on trigonometry. 
\end{itemize}
The accompanying editor's note mentioned that
Murray Klamkin generalised the problem to $n$ chords through $P$ with
equal angles of $\pi/n$ between successive chords, with the area swept
out, when these chords are rotated through an angle of $\theta$ about
$P$, then being $n\theta$. The editor's note ended with the following
parenthetical remark:

\smallskip 

 $\phantom{al}$ (This can also be proved using the solution II. Can it be proved as in solution I?) 

  \smallskip 
  
\noindent In this note, we present a calculus-based solution, based on a special case of a generalisation of ``Archimedes' theorem'', which is proved by employing 
vectors.  We purport that this solution captures in some sense the
crux\footnote{Third `crux' in the title.} of the matter.

We begin with a calculus-based proof along lines similar to the first
solution given in \cite{C}.

\section{A calculus-based proof of the Crux problem}
\label{10_feb_2020_1515_sec2}

\noindent We will use the following result. We call it Archimedes' Theorem as it is Proposition~11 
in Archimedes' work {\em The book of Lemmas} \cite[p.312]{Hea}.

\medskip 

\noindent{\bf Archimedes' Theorem.} If two mutually perpendicular
chords $A_1B_1$ and $A_2B_2$ in a unit circle with center $C$ meet at $P$, then
 $
PA_1^2+PB_1^2+PA_2^2+PB_2^2=4.
$ 
\begin{figure}[h]
      \center
      \psfrag{O}[c][c]{$C$}
      \psfrag{o}[c][c]{$P$}
      \psfrag{A}[c][c]{$B_1$}
      \psfrag{B}[c][c]{$A_1$}
      \psfrag{C}[c][c]{$B_2$}
      \psfrag{D}[c][c]{$A_2$}
      \psfrag{E}[c][c]{$D$}
      \includegraphics[width=4.8cm]{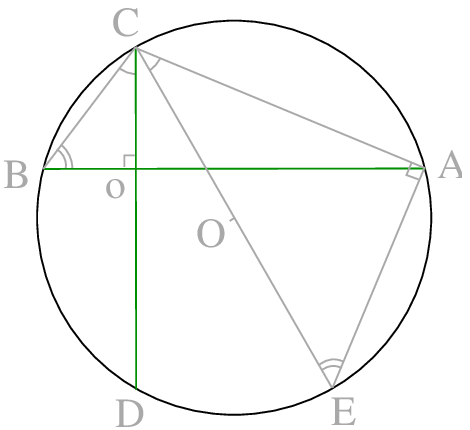}
\end{figure}
\begin{proof}$\Delta B_2PA_1\sim \Delta B_2B_1D$ since $\angle B_2 A_1 B_1=\angle B_2DB_1$ and
  $\angle B_2 PA_1=90^\circ=\angle B_2B_1D$.  So
  $\angle PB_2A_1=\angle B_1B_2D$. This implies that
  $\angle A_1CA_2=\angle B_1CD$, and so $A_1A_2=B_1D$. By Pythagoras'
  Theorem, $
PB_2^2+PB_1^2=B_2B_1^2$ and $PA_1^2+PA_2^2= A_1A_2^2$. Adding these, we obtain 
 $
PA_1^2+PB_1^2+PA_2^2+PB_2^2=B_2B_1^2+ A_1A_2^2=B_1B_2^2+ B_1D^2=B_2D^2=2^2=4.
$
\end{proof}

\noindent Now we give a calculus argument as follows. Rotating
$A_1B_1$ and $A_2 B_2$ about $P$ through an infinitesimal angle
$\textrm{d}\theta$, we obtain four sectors, with areas given by
$$
\dfrac{1}{2} PA_k^2 \;\!\textrm{d}\theta,\;\;
  \dfrac{1}{2} PB_k^2 \;\! \textrm{d}\theta, \quad k=1,2.
$$
Upon addition, and using Archimedes' Theorem, we obtain the rate of change of area 
$$
\frac{\textrm{dA}}{\textrm{d}\theta}=\frac{1}{2}(PA_1^2+PB_1^2+PA_2^2+PB_2^2)=\frac{1}{2} 4 =2, 
$$
and so the total area, if the chords are rotated through an angle
$\theta$, is given by
$$
\textrm{A}=\int_0^\theta \frac{\textrm{dA}}{\textrm{d}\theta} \;\textrm{d}\theta=\int_0^\theta 2 \;\textrm{d}\theta =2\;\!\theta.
$$

\section{A vector calculus proof}
\label{section_3}

\noindent We will first show the following:

\begin{proposition}
\label{prop_23_dec_2019_1436}
Let $P$ be any point inside a unit circle, and through $P$, let there be $n$ chords
$A_1B_1,\cdots ,A_nB_n$, such that there are equal angles of $\pi/n$ between
successive chords. Suppose moreover that $A_1B_1$ is a diameter. 
If each chord is rotated counterclockwise through
an angle $\theta$, then the total area formed by the resulting sectors
is $n\theta$.
\end{proposition}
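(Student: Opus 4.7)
The plan is to extend the scheme of Section~\ref{10_feb_2020_1515_sec2} to $n$ chords, which reduces the result to the following generalisation of Archimedes' Theorem:
$$
\sum_{k=1}^n\bigl(PA_k^2+PB_k^2\bigr)=2n.
$$
Once this identity is in hand, rotating each chord through an infinitesimal $\textrm{d}\theta$ produces $2n$ circular sectors whose areas sum to $\frac{1}{2}\sum_{k=1}^n(PA_k^2+PB_k^2)\,\textrm{d}\theta=n\,\textrm{d}\theta$, and integrating in $\theta$ yields total swept area $n\theta$, exactly as in the two-chord case of Section~\ref{10_feb_2020_1515_sec2}.

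To prove the identity I would introduce vectors. Place $C$ at the origin and set $\bbp=\vec{CP}$. Since $A_1B_1$ is a diameter, $\bbp$ is parallel to $A_1B_1$; choose coordinates so that this common direction is the $x$-axis. For each $k$, let $\bbu_k=(\cos\theta_k,\sin\theta_k)$ with $\theta_k=(k-1)\pi/n$ denote the unit direction vector of the $k$-th chord. Then the endpoints $A_k,B_k$ correspond to the two values of $s$ for which $|P+s\bbu_k|=1$, namely the roots $s_{k,1},s_{k,2}$ of
$$
s^2+2(\bbp\cdot\bbu_k)s+(|\bbp|^2-1)=0.
$$
Vieta's formulas give $PA_k^2+PB_k^2=s_{k,1}^2+s_{k,2}^2=4(\bbp\cdot\bbu_k)^2-2(|\bbp|^2-1)$, so summing over $k$ reduces the claim to
$$
\sum_{k=1}^n(\bbp\cdot\bbu_k)^2=\frac{n}{2}|\bbp|^2.
$$

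This last step is the crux of the argument: it asserts that the $n$ equally spaced unit vectors $\bbu_1,\ldots,\bbu_n$ form a tight frame in $\mR^2$. It follows from the elementary identity $\sum_{k=0}^{n-1}\cos^2(k\pi/n)=n/2$, itself a consequence of $\cos^2\phi=\tfrac{1}{2}(1+\cos 2\phi)$ and the vanishing of the geometric sum $\sum_{k=0}^{n-1}e^{2\pi ik/n}$ for $n\geq 2$. The diameter hypothesis plays only a simplifying role, letting us align $\bbp$ with $\bbu_1$ so that $\bbp\cdot\bbu_k=|\bbp|\cos\theta_k$; the frame identity itself does the real work, after which the $|\bbp|^2$ terms cancel and we are left with the clean value $2n$. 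I expect the presentation of this frame identity, rather than any technical difficulty, to be the main design choice in writing the proof, since once it is phrased vectorially, everything else is routine bookkeeping and the infinitesimal-sector calculus already established.
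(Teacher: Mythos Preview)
Your proposal is correct and follows the paper's overall architecture exactly: reduce to the identity $\sum_{k=1}^n(PA_k^2+PB_k^2)=2n$, then apply the infinitesimal-sector calculus of Section~\ref{10_feb_2020_1515_sec2} and integrate. The difference lies in how that identity is established. The paper (Lemma~\ref{23_dec_2019_1840}) works with the midpoints $C_k$ of the chords, uses $\overrightarrow{PA}_k+\overrightarrow{PB}_k=2\overrightarrow{PC}_k$, and then a geometric pairing of $\overrightarrow{PC}_k$ with $\overrightarrow{PC}_{n-k}$ to reduce to the same cosine sum $\sum_k\cos^2(k\pi/n)=n/2$. You instead parametrise each chord, read off $PA_k^2+PB_k^2$ from Vieta's formulas on the intersection quadratic, and land on the tight-frame identity $\sum_k(\bbp\cdot\bbu_k)^2=\tfrac{n}{2}|\bbp|^2$. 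Your route is a bit more algebraic and, as you observe, makes it transparent that the diameter hypothesis is inessential: the tight-frame identity holds for \emph{any} $\bbp$, so your argument in fact yields Theorem~\ref{10_feb_2020_Arch_gen_14:37} directly, whereas the paper reaches that general case only later via the area argument of Theorem~\ref{10_feb_2020_14_05}. The paper's midpoint approach, on the other hand, keeps the geometry of the perpendicular from the centre to each chord in view.
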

\begin{figure}[h]
      \center
      \psfrag{P}[c][c]{${ P}$}
      \psfrag{t}[c][c]{${\scriptscriptstyle \frac{k\pi}{n}}$}
      \psfrag{a}[c][c]{${ A_1}$}
      \psfrag{b}[c][c]{${ B_1}$}
      \psfrag{A}[c][c]{${ A_{2}}$}
      \psfrag{B}[c][c]{${ B_{2}}$}
      \psfrag{c}[c][c]{${ A_n}$}
      \psfrag{d}[c][c]{${ B_{n}}$}
      \psfrag{D}[c][c]{${ \cdots}$}
      \includegraphics[width=5.4cm]{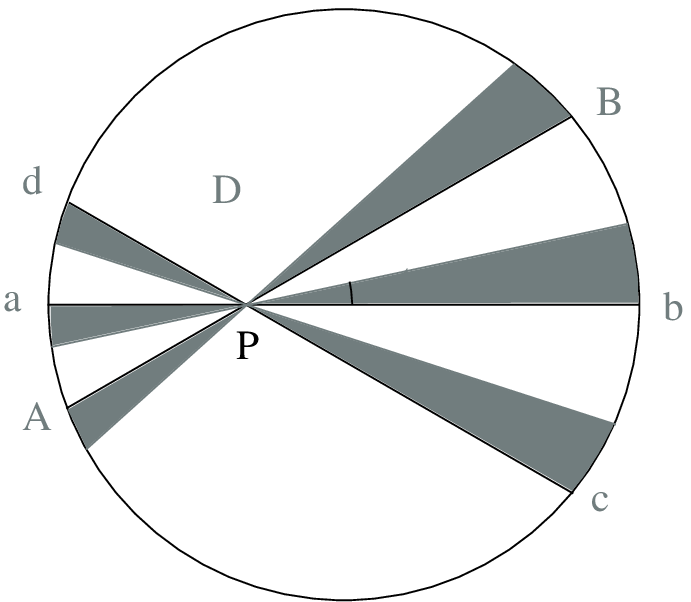}
\end{figure}

\noindent 
This will be shown to yield the generalisation (given in Theorem~\ref{10_feb_2020_14_05} below) of the Crux problem, where as
opposed to the situation above, one of the chords needn't be the diameter.

In order to prove Proposition~\ref{prop_23_dec_2019_1436},
we will first prove a special case of a generalisation of Archimedes' Theorem (Theorem~\ref{10_feb_2020_Arch_gen_14:37} in the next section, 
saying that the sum of the squared distances from a point inside a unit circle to the vertices of $n$ equally angularly spaced chords passing through that point is $2n$), 
when one of the
chords $A_1 B_1$ is the diameter.

\begin{lemma}[Generalised Archimedes' theorem $-$ special case]
\label{23_dec_2019_1840}
Let $P$ be any point inside a unit circle, and let there be $n$ chords
$A_1B_1,\cdots A_nB_n$ through $P$ such that there are equal angles of
$\pi/n$ between successive chords. Suppose, moreover, that $A_1B_1$
is a diameter. Then
 $
PA_1^2+PB_1^2+\cdots+ PA_n^2+PB_n^2=2n.
$ 
\end{lemma}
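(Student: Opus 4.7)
The plan is to reduce the identity to a single, classical trigonometric sum by introducing vector coordinates. Place $C$ at the origin, write $\bba=\overrightarrow{CP}$, and let $\bbu_k$ denote a unit vector along the chord $A_kB_k$. Parametrising each chord as $P+t\bbu_k$, the endpoints $A_k,B_k$ correspond to the two roots of the quadratic
$$t^2 + 2(\bba\cdot\bbu_k)\,t + (|\bba|^2-1) = 0$$
obtained by imposing $|\bba+t\bbu_k|=1$. By Vieta's formulas, the sum of squares of the two roots is $4(\bba\cdot\bbu_k)^2 - 2(|\bba|^2-1)$, so
$$PA_k^2 + PB_k^2 = 4(\bba\cdot\bbu_k)^2 + 2(1-|\bba|^2).$$

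Next I would invoke the diameter hypothesis: since $A_1B_1$ passes through $C$, the vector $\bbu_1$ is parallel to $\bba$ (assuming $P\neq C$; the case $P=C$ is immediate as then all $PA_k=PB_k=1$). Consequently $\bbu_k$ makes angle $(k-1)\pi/n$ with $\bba$, and $(\bba\cdot\bbu_k)^2=|\bba|^2\cos^2((k-1)\pi/n)$. Summing over $k=1,\ldots,n$ gives
$$\sum_{k=1}^n (PA_k^2+PB_k^2) \;=\; 4|\bba|^2\sum_{k=0}^{n-1}\cos^2\!\left(\tfrac{k\pi}{n}\right) + 2n(1-|\bba|^2).$$

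The only step that could realistically be called an obstacle is the evaluation of the trigonometric sum $\sum_{k=0}^{n-1}\cos^2(k\pi/n) = n/2$ (valid for $n\ge 2$). I would obtain this from the double-angle formula $\cos^2 x = (1+\cos 2x)/2$ together with the vanishing of the sum of $n$-th roots of unity, $\sum_{k=0}^{n-1} e^{2\pi i k/n} = 0$. Substituting back collapses the right-hand side to $2n|\bba|^2 + 2n - 2n|\bba|^2 = 2n$, as required. A conceptual byproduct worth flagging is that the identity $\sum_{k=0}^{n-1}\cos^2(\theta+k\pi/n)=n/2$ holds for \emph{any} offset $\theta$, so the diameter hypothesis is essentially cosmetic here---foreshadowing why it can be dropped in the full generalisation referred to immediately after the lemma.
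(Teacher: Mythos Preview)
Your argument is correct. Both your proof and the paper's reduce the claim to the trigonometric identity $\sum_{k}\cos^2(k\pi/n)=n/2$ (via the double-angle formula and the vanishing of the sum of $n$th roots of unity), but the routes to that reduction differ. The paper expands $\|\overrightarrow{PA}_k-\overrightarrow{PC}_1\|^2=1$ for each endpoint, collects terms to isolate $\langle\overrightarrow{PC}_1,\sum_k(\overrightarrow{PA}_k+\overrightarrow{PB}_k)\rangle$, then uses the chord midpoints $C_k$ and a reflection-pairing $C_k\leftrightarrow C_{n-k}$ across the diameter to express $\sum_k\overrightarrow{PC}_k$ as a multiple of $\overrightarrow{PC}_1$. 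Your approach instead parametrises each chord, hits the circle equation to get a quadratic in $t$, and reads off $PA_k^2+PB_k^2$ directly from Vieta's formulas as $4(\bba\cdot\bbu_k)^2+2(1-|\bba|^2)$. This is shorter and avoids the midpoint geometry entirely. A further dividend of your method is the closing remark: since $\sum_{k=0}^{n-1}\cos^2(\theta+k\pi/n)=n/2$ for \emph{every} offset $\theta$, your computation already proves the full generalised Archimedes theorem without the diameter hypothesis, whereas the paper obtains that general case only later, by a separate area-subtraction argument.
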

\begin{proof}
Let $C_1,\cdots, C_n$ be the centers of $A_1B_1,\cdots, A_n B_n$. 
As $A_1B_1$ is the diameter, $C_1$ is the center of the circle.  We
know that for all $1\leq k\leq n $,
\begin{eqnarray*}
 &&  \langle \overrightarrow{PA}_k- \overrightarrow{PC}_1,\overrightarrow{PA}_k- \overrightarrow{PC}_1\rangle 
=\|\overrightarrow{PA}_k- \overrightarrow{PC}_1\|_2^2=1,
 \\
  &&  \langle \overrightarrow{PB}_k- \overrightarrow{PC}_1,\overrightarrow{PB}_k- \overrightarrow{PC}_1\rangle 
=\|\overrightarrow{PB}_k- \overrightarrow{PC}_1\|_2^2=1.
\end{eqnarray*}
Expanding these, adding, and rearranging, we obtain
\begin{equation}
  \label{23_dec_2019_1829}
  \sum_{k=1}^n (PA_k^2+ PB_k^2)
=2\left\langle \overrightarrow{PC}_1,\sum_{k=1}^n (\overrightarrow{PA}_k+\overrightarrow{PB}_k)\right\rangle -2n PC_1^2+2n.
\end{equation}
So we need to determine the inner product on the RHS. We have
\begin{eqnarray*}
  \overrightarrow{PA}_k= \overrightarrow{PC}_k+\overrightarrow{C_kA}_k &\textrm{ and }&
  \overrightarrow{PB}_k= \overrightarrow{PC}_k+\overrightarrow{C_kB}_k.
\end{eqnarray*}
But since
$\overrightarrow{C_kA}_k+\overrightarrow{C_kB}_k=\mathbf{0}$, we
obtain $\overrightarrow{PA}_k+\overrightarrow{PB}_k=2\overrightarrow{PC}_k$.  
Hence
$$
\sum_{k=1}^n ( \overrightarrow{PA}_k+\overrightarrow{PB}_k)
=2\sum_{k=1}^n\overrightarrow{PC}_k =
\sum_{k=1}^n\overrightarrow{PC}_k+\sum_{k=1}^n\overrightarrow{PC}_{n-k}=\sum_{k=1}^n(\overrightarrow{PC}_k+\overrightarrow{PC}_{n-k}).
$$
\begin{figure}[!t]
      \center
      \psfrag{P}[c][c]{${ P}$}
      \psfrag{t}[c][c]{${\frac{k\pi}{n}}$}
      \psfrag{a}[c][c]{${ A_k}$}
      \psfrag{b}[c][c]{${ B_k}$}
      \psfrag{A}[c][c]{${ A_{n-k}}$}
      \psfrag{B}[c][c]{${ B_{n-k}}$}
      \psfrag{c}[c][c]{${ C_k}$}
      \psfrag{C}[c][c]{${ C_{n-k}}$}
      \psfrag{x}[c][c]{${ A_1}$}
      \psfrag{y}[c][c]{${ B_1}$}
      \psfrag{z}[c][c]{${ C_1}$}
      \includegraphics[width=5.4cm]{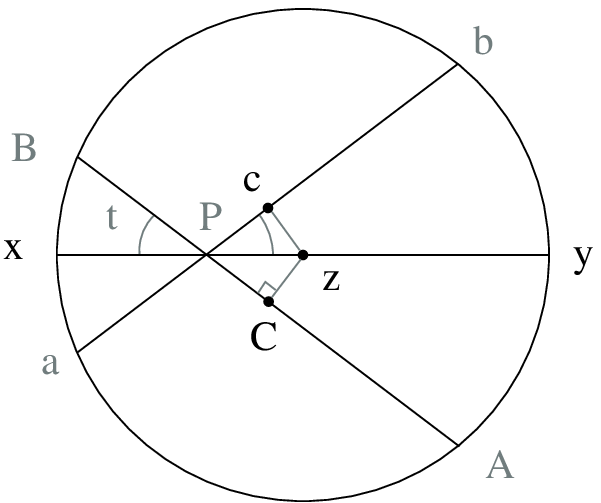}
      \caption{}
      \label{fig_11_feb_2020_12_29} 
\end{figure}
 
\noindent By referring to Figure~\ref{fig_11_feb_2020_12_29}, we see that for all
$1\leq k\leq n$,
{\small 
$$
\overrightarrow{PC}_k\!+\!\overrightarrow{PC}_{n-k}
\!=\!
2 \;\! PC_{k} \;\! \left(\cos \frac{k\pi}{n}\right)\;\! \frac{\overrightarrow{PC}_1}{PC_1}
\!=\!
2\;\!  \left(\cos \frac{k\pi}{n}\right)\;\! PC_1\;\! \left(\cos \frac{k\pi}{n}\right)\;\! \frac{\overrightarrow{PC}_1}{PC_1}
\!=\!
2\left(\cos \frac{k\pi}{n}\right)^2 \overrightarrow{PC}_1.
$$
}

\noindent 
So 
\begin{equation}
\label{23_dec_2019_1833} 
\sum_{k=1}^n ( \overrightarrow{PA}_k+\overrightarrow{PB}_k) = 
\sum_{k=1}^n ( \overrightarrow{PC}_k+\overrightarrow{PC}_{n-k}) =
\sum_{k=1}^n 2\left(\cos \frac{k\pi}{n}\right)^2  \overrightarrow{PC}_1.
\end{equation}
Now 
\begin{equation}
\label{23_dec_2019_1832} 
\sum_{k=1}^n 2\left(\cos \frac{k\pi}{n}\right)^2
=\sum_{k=1}^n \left(1+\left(\cos k\frac{2\pi}{n}\right)\right)=n+0=n,
\end{equation}
where we have used 
\begin{equation}
 \label{10_feb_14:23_2020} 
 {\textstyle \displaystyle \sum\limits_{k=1}^n\cos \left(k\frac{2\pi}{n}\right)=0}
\end{equation}
%(as this sum is the horizontal component of the zero(!) sum vectors in
%the unit circle whose tips lie on the vertices of a regular $n$-gon).
(To see \eqref{10_feb_14:23_2020}, we first note that this sum is the horizontal component of the sum $\overrightarrow{S}$ of $n$ vectors whose tails lie at the center of the unit circle 
and whose tips lie on the vertices of a regular $n$-gon. To see that $\overrightarrow{S}$ is zero, imagine 
rotating each vector counterclockwise through an angle of $\frac{2\pi}{n}$, 
and let the sum of the rotated vectors be $\overrightarrow{S'}$. On grounds of symmetry of the regular polygon, 
$\overrightarrow{S}=\overrightarrow{S'}$. On the other hand $\overrightarrow{S'}$ ought be a 
rotated version of $\overrightarrow{S}$ through an angle of $\frac{2\pi}{n}$. This can only happen if $\overrightarrow{S}=\mathbf{0}$. 
Alternative justifications of \eqref{10_feb_14:23_2020} 
can be given by first summing the geometric series 
$$
\sum_{k=1}^n e^{i\frac{2\pi}{n} k}=e^{i \frac{2\pi}{n}} \frac{1-e^{i 2\pi}}{1-e^{i\frac{2\pi}{n}}}=0,
$$
and taking real parts, or by noticing the sum of the $n$th roots of unity must add up to $0$ since the coefficient of $z^1$ in $z^n-1$ is $0$, and again taking real parts.) 

\smallskip 

\noindent 
Consequently, using \eqref{23_dec_2019_1829}, \eqref{23_dec_2019_1833},
and \eqref{23_dec_2019_1832}, we obtain
\begin{eqnarray*}
 \sum_{k=1}^n (PA_k^2+ PB_k^2)&=&2\left\langle \overrightarrow{PC}_1,\sum_{k=1}^n (\overrightarrow{PA}_k+\overrightarrow{PB}_k)\right\rangle -2n PC_1^2+2n
 \\
 &=& 2\left\langle \overrightarrow{PC}_1, n \overrightarrow{PC}_1\right\rangle -2n PC_1^2+2n
 =
 \cancel{2n PC_1^2}-\cancel{2n PC_1^2} +2n=2n.%\phantom{\left\langle \sum_{k=1}^n \right.}
 \qedhere
\end{eqnarray*}
\end{proof}

\noindent We are now ready to prove Proposition~\ref{prop_23_dec_2019_1436}.

\begin{proof}[Proof of Proposition~\ref{prop_23_dec_2019_1436}]
Rotating $A_1B_1,\cdots, A_n B_n$ counterclockwise about $P$ through an infinitesimal angle $\textrm{d}\theta$, we obtain $2n$ sectors, 
with areas given by
$$
  \dfrac{1}{2} PA_k^2 \;\! \textrm{d}\theta,\;\;
  \dfrac{1}{2} PB_k^2 \;\! \textrm{d}\theta, \quad k=1,\cdots, n.
$$
Upon addition, and using Lemma~\ref{23_dec_2019_1840}, we obtain that the rate of change of the total area is 
$$
\frac{\textrm{dA}}{\textrm{d}\theta}=\frac{1}{2}\sum_{k=1}^n (PA_k^2+ PB_k^2) \;\! \textrm{d}\theta =\frac{1}{2} \;\!2n =n, 
$$
and so the total area, if the chords are rotated through an angle
$\theta$, is given by
$$
\phantom{aaaaaaaaaaaaaaaaaaaaaaa} A=\int_0^\theta \frac{\textrm{dA}}{\textrm{d}\theta} \;\textrm{d}\theta=\int_0^\theta n\; \textrm{d}\theta =n\;\! \theta.
\phantom{aaaaaaaaaaaaaaaaaaaaaaa} 
\qedhere
$$
\end{proof}

\begin{theorem}
\label{10_feb_2020_14_05}
Let $P$ be any point inside a unit circle, and let there be $n$ chords
through $P$ such that there are equal angles of $\pi/n$ between
successive chords. If each chord is rotated counterclockwise through
an angle $\theta$, then the total area formed by the resulting sectors
is $n\theta$.
\end{theorem}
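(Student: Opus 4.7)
I would reduce Theorem~\ref{10_feb_2020_14_05} to Proposition~\ref{prop_23_dec_2019_1436} by a preliminary rotation that aligns one of the chords with a diameter. Given the configuration in the theorem, let $\alpha\in[0,\pi/n)$ be the unique angle such that rotating all $n$ chords clockwise about $P$ by $\alpha$ brings one of them onto the diameter through $P$; such an $\alpha$ exists and is unique because the chords are equiangularly spaced by $\pi/n$, and a single chord rotating about $P$ through an angle of $\pi/n$ passes the diameter direction exactly once. Call the resulting configuration $R$ and, for $\varphi\geq 0$, let $\tilde{A}(\varphi)$ denote the area swept when $R$ is rotated counterclockwise by angle $\varphi$; Proposition~\ref{prop_23_dec_2019_1436} then gives $\tilde{A}(\varphi)=n\varphi$.

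The theorem's initial configuration is then $R$ rotated counterclockwise by $\alpha$, and its final configuration is $R$ rotated counterclockwise by $\alpha+\theta$. The area $A(\theta)$ sought in the theorem is precisely the area swept during the ``second phase'' of this two-phase rotation. Provided the two phases sweep essentially disjoint regions, the additivity
\[
\tilde{A}(\alpha+\theta)\;=\;\tilde{A}(\alpha)+A(\theta)
\]
combined with Proposition~\ref{prop_23_dec_2019_1436} yields $A(\theta)=n(\alpha+\theta)-n\alpha=n\theta$ at once.

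The main delicate point is this additivity claim. When $\alpha+\theta\leq \pi/n$, each of the $2n$ angular arcs at $P$ traced out by a single half-chord during the combined rotation has width $\alpha+\theta$, no bigger than the angular spacing $\pi/n$ between consecutive half-chords from $P$, so these arcs stay pairwise disjoint, and within a single half-chord the two phases dovetail endpoint-to-endpoint; hence additivity is immediate. In the remaining case $\alpha+\theta>\pi/n$ (still with $\theta\leq \pi/n$, which is the natural range of the statement since $n\theta$ cannot exceed the disk area $\pi$), I would split the theorem's rotation at the intermediate instant $t^{*}=\pi/n-\alpha$, at which one of the chords momentarily becomes a diameter: the sub-rotation from $0$ to $t^{*}$ is handled by the preceding case, while the sub-rotation from $t^{*}$ to $\theta$ starts from a diameter configuration and has rotation angle $\theta-t^{*}\leq \pi/n$, so Proposition~\ref{prop_23_dec_2019_1436} applies directly to it; adding yields $A(\theta)=n t^{*}+n(\theta-t^{*})=n\theta$.
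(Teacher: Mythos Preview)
Your approach is correct and is essentially the paper's own: both pass through a diameter configuration and recover the desired area as the difference of two applications of Proposition~\ref{prop_23_dec_2019_1436}, your $\tilde A(\alpha+\theta)-\tilde A(\alpha)$ being exactly the paper's $n\theta''-n\theta'$ with $\theta'=\alpha$ and $\theta''=\alpha+\theta$. The only difference is that the paper carries out the subtraction sector by sector, writing $\textrm{A}(\arc{B_k'B_k''})=\textrm{A}(\arc{B_kB_k''})-\textrm{A}(\arc{B_kB_k'})$ for each of the $2n$ half-chords separately; since one sector is literally contained in the other, this makes the additivity automatic and renders your disjointness case analysis unnecessary.
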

\begin{proof} To see how this follows from
  Proposition~\ref{prop_23_dec_2019_1436}, we first construct the
  diameter $A_1B_1$ through $P$, and consider successive anticlockwise
  rotations of this diameter through angles of $\pi/n$, resulting in
  the chords $A_2B_2,\cdots , A_n B_n$. Let the given chords from the
  theorem statement be labelled as $A_1'B_1',\cdots, A_n'B_n'$, and let their
  rotated versions (though an angle $\theta$) be labelled as
  $A_1'' B_1'',\cdots, A_n''B_n''$.
\begin{figure}[h]
      \center
      \psfrag{P}[c][c]{${\scriptstyle P}$}
      \psfrag{t}[c][c]{${\scriptstyle\theta'}$}
      \psfrag{T}[c][c]{${\scriptstyle\theta''}$}
      \psfrag{s}[c][c]{${\scriptstyle\theta}$}
      \psfrag{a}[c][c]{${\scriptstyle A_1}$}
      \psfrag{b}[c][c]{${\scriptstyle B_1}$}
      \psfrag{c}[c][c]{${\scriptstyle A_2}$}
      \psfrag{d}[c][c]{${\scriptstyle B_2}$}
      \psfrag{e}[c][c]{${\scriptstyle A_3}$}
      \psfrag{f}[c][c]{${\scriptstyle B_3}$}
      \psfrag{A}[c][c]{${\scriptstyle A_1''}$}
      \psfrag{B}[c][c]{${\scriptstyle B_1'}$}
      \psfrag{C}[c][c]{${\scriptstyle A_1'}$}
      \psfrag{D}[c][c]{${\scriptstyle B_1''}$}
      \psfrag{E}[c][c]{${\scriptstyle A_2'}$}
      \psfrag{F}[c][c]{${\scriptstyle B_2'}$}
      \psfrag{G}[c][c]{${\scriptstyle A_2''}$}
      \psfrag{H}[c][c]{${\scriptstyle B_2''}$}
      \psfrag{Z}[c][c]{${\scriptstyle C_1}$}
      \includegraphics[width=8.7cm]{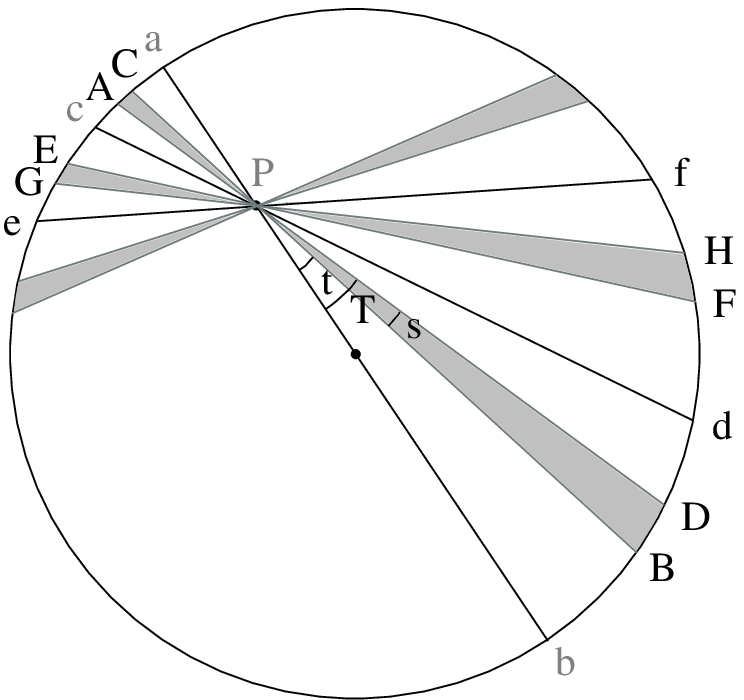}
\end{figure}

\noindent Let the angle between $A_1 B_1$ and $A_1'B_1'$ be $\theta'$,
and that between $A_1B_1$ and $A_1'' B_1''$ be $ \theta''$.  Then for
all $1\leq k\leq n$, we use the notation $ \arc{B_k' B_k''}$ for the 
sector formed by the corresponding arc with $P$, and denote the area of the sector by $\textrm{A}(\arc{B_k' B_k''})$. Then we have:
{\small 
\begin{eqnarray*}
 \sum_{k=1}^n [\textrm{A}(\arc{B_k' B_k''})+\textrm{A}(\arc{A_k' A_k''})]
 &=& 
 \sum_{k=1}^n [\textrm{A}(\arc{B_k B_k''})- \textrm{A}(\arc{B_k B_k'})+ \textrm{A}(\arc{A_k A_k''})- \textrm{A}( \arc{A_k A_k'})]\\
 &=& 
 \sum_{k=1}^n [\textrm{A}(\arc{B_k B_k''})+ \textrm{A}(\arc{A_k A_k''})]-\sum_{k=1}^n[ (\textrm{A}(\arc{B_k B_k'})+  \textrm{A}( \arc{A_k A_k'}))]\\
 &=&n\theta''-n\theta'=n(\theta''-\theta')=n\theta.\phantom{aaaaaaaaaaaaaaaaaa\sum_{k=1}^n \textrm{A}(\arc{B_k B_k''})}\hfill\qedhere
\end{eqnarray*}}
\end{proof}

\goodbreak

\section{Archimedes' Theorem}

\noindent A consequence of Theorem~\ref{10_feb_2020_14_05} is the following
generalisation of Archimedes' Theorem from the $n=2$ chord case considered earlier in Section~\ref{10_feb_2020_1515_sec2}.

\begin{theorem}[Generalised Archimedes' theorem]
\label{10_feb_2020_Arch_gen_14:37}
Let $P$ be any point inside a unit circle, and let there be $n$ 
chords $A_1B_1,\cdots A_nB_n$ through $P$ such that there are equal
angles of $\pi/n$ between successive chords. Then
 $$
PA_1^2+PB_1^2+\cdots+ PA_n^2+PB_n^2=2n.
$$ 
\end{theorem}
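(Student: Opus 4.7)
The plan is to derive Theorem~\ref{10_feb_2020_Arch_gen_14:37} from Theorem~\ref{10_feb_2020_14_05} by reversing the direction of the infinitesimal area argument that was used in the proofs of Proposition~\ref{prop_23_dec_2019_1436} and the $n=2$ case in Section~\ref{10_feb_2020_1515_sec2}. In those earlier arguments, a sum-of-squared-distances identity (Archimedes' Theorem or its special-case generalisation Lemma~\ref{23_dec_2019_1840}) was fed into the sector formula $\frac{1}{2}r^2\,\mathrm{d}\theta$ to extract the area. Here we will run this implication backwards: knowing the area already, we will read off the sum of squared distances.

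Concretely, the first step is to take an arbitrary configuration of $n$ equally-angularly-spaced chords $A_1B_1,\ldots,A_nB_n$ through $P$, with no assumption that any of them is a diameter, and consider the area $\mathrm{A}(\theta)$ swept by their counterclockwise rotation about $P$ through angle $\theta$. The second step is to invoke Theorem~\ref{10_feb_2020_14_05}, which gives $\mathrm{A}(\theta)=n\theta$ for every $\theta\geq 0$, and hence $\frac{\mathrm{d}\mathrm{A}}{\mathrm{d}\theta}\big|_{\theta=0}=n$. The third step is to compute the same derivative at $\theta=0$ directly from the picture: rotating each chord through an infinitesimal angle $\mathrm{d}\theta$ produces $2n$ circular sectors with vertex at $P$ and radii $PA_k$, $PB_k$, whose total area is
$$
\frac{1}{2}\sum_{k=1}^n\bigl(PA_k^2+PB_k^2\bigr)\,\mathrm{d}\theta.
$$
Equating the two expressions for the derivative and multiplying by $2$ yields $PA_1^2+PB_1^2+\cdots+PA_n^2+PB_n^2=2n$.

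I do not foresee a serious obstacle, since all the heavy lifting has already been done in the vector calculation of Lemma~\ref{23_dec_2019_1840} and the differencing argument of Theorem~\ref{10_feb_2020_14_05}. The only point requiring a bit of care is the logical ordering: one must check that Theorem~\ref{10_feb_2020_14_05} was established without circular dependence on the general identity that we are now proving, which is indeed the case, as its proof relied only on the diameter case (Lemma~\ref{23_dec_2019_1840}) via Proposition~\ref{prop_23_dec_2019_1436}. Conceptually the upshot is elegant: the generalised algebraic identity of Archimedes type is equivalent to the constancy of the infinitesimal sweep rate, and Theorem~\ref{10_feb_2020_14_05} packages exactly this constancy.
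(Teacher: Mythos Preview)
Your proposal is correct and matches the paper's own proof essentially line for line: the paper also invokes Theorem~\ref{10_feb_2020_14_05} to assert that the area swept through an infinitesimal angle $\mathrm{d}\theta$ equals $n\,\mathrm{d}\theta$, equates this with $\tfrac{1}{2}\sum_k(PA_k^2+PB_k^2)\,\mathrm{d}\theta$, and reads off the identity. Your additional remark checking that the logical dependence is non-circular (Theorem~\ref{10_feb_2020_14_05} rests only on the diameter case, Lemma~\ref{23_dec_2019_1840}) is a welcome clarification that the paper leaves implicit.
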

\begin{proof} 
By Theorem~\ref{10_feb_2020_14_05}, we know that if the chords 
are rotated through an infinitesimal angle $\textrm{d}\theta$, the sum of the
areas of the resulting sectors is $n \textrm{d}\theta$. But this area is also equal to 
 $
\frac{1}{2}(PA_1^2+PB_1^2+\cdots+ PA_n^2+PB_n^2)\;\! \textrm{d}\theta.
$ 
So we obtain $PA_1^2+PB_1^2+\cdots+ PA_n^2+PB_n^2=2n$. 
\end{proof}

\bigskip 

\noindent {\bf Acknowledgement:} The author is grateful to the reviewer for useful suggestions on improving the exposition.

\end{document}